\newcommand{\C}{\mathbb{C}}
\newcommand{\QQ}{\mathbb{Q}}
\newcommand{\PP}{\mathbb{P}}
\newcommand{\MM}{\mathcal M}
\newcommand{\wt}{\widetilde}
\newcommand{\rom}{\romannumeral}
\DeclareMathOperator{\ide}{id}
\DeclareMathOperator{\ima}{Im}
\newtheorem{convention}{Conventions}
\newtheorem{nonumbering}{Theorem}
\newtheorem{nonumberingc}{Corollary}
\newtheorem{nonumberingt}{Acknowledgements}
\begin{document}

\title{Some Calabi--Yau fourfolds verifying Voisin's conjecture}

\author{Robert Laterveer}

\institute{CNRS - IRMA, Universit\'e de Strasbourg \at
              7 rue Ren\'e Descartes \\
              67084 Strasbourg cedex\\
              France\\
              \email{laterv@math.unistra.fr}   }

\date{Received: date / Accepted: date}

\maketitle

\begin{abstract} Motivated by the Bloch--Beilinson conjectures, Voisin has made a conjecture concerning zero--cycles on self--products of Calabi--Yau varieties. This note contains some examples of Calabi--Yau fourfolds verifying Voisin's conjecture.
\end{abstract}

\keywords{Algebraic cycles \and Chow groups \and motives \and Voisin's conjecture}

\subclass{14C15, 14C25, 14C30.}

\section{Introduction}

Let $X$ be a smooth projective variety over $\C$, and let $A^i(X):=CH^i(X)_{\QQ}$ denote the Chow groups of $X$ (i.e. the groups of codimension $i$ algebraic cycles on $X$ with $\QQ$--coefficients, modulo rational equivalence). As is well--known, the world of algebraic cycles is densely inhabited by open problems \cite{B}, \cite{J2}, \cite{Kim}, \cite{Vo}. One of these open problems is the following conjecture formulated by Voisin (which can be seen as a version of Bloch's conjecture for varieties of geometric genus one):

\begin{conjecture}[Voisin \cite{V9}]\label{conjvois} Let $X$ be a smooth projective complex variety of dimension $n$ with $h^{j,0}(X)=0$ for $0<j<n$ and $p_g(X)=1$.  
For any zero--cycles $a,a^\prime\in A^n_{}(X)$ of degree $0$, we have
  \[ a\times a^\prime=(-1)^n a^\prime\times a\ \ \ \hbox{in}\ A^{2n}(X\times X)\ .\]
  (Here $a\times a^\prime$ is short--hand for the cycle class $(p_1)^\ast(a)\cdot(p_2)^\ast(a^\prime)\in A^{2n}(X\times X)$, where $p_1, p_2$ denote projection on the first, resp. second factor.)
   \end{conjecture}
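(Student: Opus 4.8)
The plan is to recast the identity as the vanishing of one explicit self--correspondence, to reduce that vanishing to a statement about the transcendental motive of $X$, and then to finish --- as far as is currently possible --- with the theory of finite--dimensional motives in the sense of Kimura and O'Sullivan. Let $\iota\colon X\times X\to X\times X$ be the involution exchanging the two factors, and set
\[ P_-:=\tfrac12\bigl(\Delta_{X\times X}-(-1)^n\,\Gamma_\iota\bigr),\]
where $\Delta_{X\times X}$ is the diagonal and $\Gamma_\iota$ the graph of $\iota$. Since $\iota_\ast(a\times a^\prime)=a^\prime\times a$, one has $a\times a^\prime-(-1)^n a^\prime\times a=2\,P_-(a\times a^\prime)$, so it suffices to show that $P_-$ annihilates the product of any two degree--$0$ zero--cycles. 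A Künneth computation shows that on $H^n(X)\otimes H^n(X)$ the pullback $\iota^\ast$ acts as $(-1)^n$ times the naive transposition $\sigma$ (the Koszul sign attached to two classes of degree $n$); hence the cohomological realization of $P_-$ on middle cohomology is $\tfrac12(\mathrm{id}-\sigma)$, the honest antisymmetrizer with image $\wedge^2 H^n(X)$.

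Two hypotheses now intervene. The vanishing $h^{j,0}(X)=0$ for $0<j<n$ guarantees, through a refined Chow--Künneth decomposition, that every homologically trivial zero--cycle factors through the transcendental motive $t^n(X)$, whose realization is the transcendental Hodge structure $H^n_{\mathrm{tr}}(X)$; accordingly $a\times a^\prime$ is governed by $t^n(X)\otimes t^n(X)$, and the cycle $Z:=a\times a^\prime-(-1)^n a^\prime\times a$ by the antisymmetric summand, whose realization is $\wedge^2 H^n_{\mathrm{tr}}(X)$. The condition $p_g(X)=1$ says $\dim H^{n,0}_{\mathrm{tr}}(X)=1$, so the top holomorphic piece of this summand, namely $\wedge^2 H^{n,0}_{\mathrm{tr}}(X)$, is the exterior square of a line and hence vanishes. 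This is the precise Hodge--theoretic meaning of Voisin's ``geometric genus one'' heuristic: the motive controlling $Z$ carries no holomorphic top--form.

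It remains to convert this cohomological vanishing into rational equivalence. The plan is to invoke finite--dimensionality: if $t^n(X)$ is finite--dimensional then so is its antisymmetric square, and one aims to run a Bloch--Srinivas / Guletskii--Pedrini type nilpotence argument --- the higher--weight analogue of the theorem that a finite--dimensional surface motive with $p_g=0$ has vanishing Chow group of zero--cycles --- to deduce that a finite--dimensional motive without holomorphic top--forms contributes nothing to $A^{2n}(X\times X)$. Granting this, $P_-(a\times a^\prime)=0$, that is $Z=0$, whence $a\times a^\prime=(-1)^n a^\prime\times a$.

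The main obstacle is precisely this last implication in the stated generality. For an arbitrary $X$ satisfying the hypotheses one knows neither the refined Chow--Künneth decomposition isolating $t^n(X)$ (a form of Murre's conjectures) nor the finite--dimensionality of $t^n(X)$; and even granting both, the passage from ``no holomorphic top--form'' to ``no zero--cycles'' for a motive of weight $2n$ is itself an instance of the Bloch--Beilinson conjectures, out of reach beyond low--dimensional building blocks. This is why the statement is recorded as a conjecture, and why one can verify it unconditionally only for those $X$ assembled from curves, abelian varieties and $K3$ surfaces --- whose transcendental motives are finite--dimensional and amenable to the Guletskii--Pedrini method --- the ``some examples'' of the title.
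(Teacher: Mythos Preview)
The statement you were asked to prove is Conjecture~\ref{conjvois}: it is recorded in the paper as an \emph{open conjecture} of Voisin, not as a theorem, and the paper contains no proof of it in the stated generality. What the paper actually proves is Theorem~\ref{main}, which establishes the conclusion only for certain explicit Calabi--Yau fourfolds built out of particular $K3$ double planes; the argument there is specific to that geometry (reduction to $S\times S$ via the quotient map and the MCK decomposition, then quoting \cite{moi} for the $K3$ surface $S$). So there is no ``paper's own proof'' of Conjecture~\ref{conjvois} to compare against.

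Your write--up is, accordingly, not a proof either, and to your credit you say so explicitly in the final paragraph: the reduction you outline hinges on (a) the existence of a refined Chow--K\"unneth decomposition isolating $t^n(X)$, (b) Kimura--O'Sullivan finite--dimensionality of $t^n(X)$, and (c) the Bloch--Beilinson--type implication from ``no holomorphic top form'' to ``no zero--cycles'' for the antisymmetric square --- all three of which are open in the generality of the hypotheses. As a motivational sketch of why the conjecture is plausible and of where the difficulties lie, your text is accurate and well--organized; but it should be labelled as heuristics rather than as a proof proposal, since every substantive step is conditional on conjectures at least as deep as the one being ``proved''.
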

   
 So far, conjecture \ref{conjvois} is wide open for a general $K3$ surface (on the positive side, cf. \cite{V9}, \cite{moi}, \cite{des}, \cite{tod} for some cases where this conjecture is verified).

  The main result of this note gives a series of examples in dimension $4$ verifying Voisin's conjecture:
  
  \begin{nonumbering}[=theorem \ref{main}] Let $S$ be a $K3$ surface obtained as (a desingularization of) a double plane branched along the  union of a smooth quartic and a smooth quadric. Let $\iota\colon S\to S$ be the covering involution. Let $X$ be one of the following:
 
 \noindent
 (\rom1) a smooth Calabi--Yau fourfold obtained as a crepant resolution of the quotient $S^{[2]}/\iota^{[2]}$;

 \noindent
 (\rom2) a smooth Calabi--Yau fourfold obtained as a crepant resolution of the quotient $(S\times S)/(\iota\times\iota)$.
  
  Then any $a,a^\prime\in A^4_{}(X)$ of degree $0$ satisfy the equality
   \[ a\times a^\prime =a^\prime\times a \ \ \ \hbox{in}\ A^8(X\times X)\ .\]
  \end{nonumbering}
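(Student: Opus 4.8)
The plan is to reduce the claim for $X$ to Voisin's conjecture for $S$ itself --- which is known for this particular family of K3 surfaces --- by passing to zero--cycles on $S\times S$ and then running a short computation with exterior products. So, first I would pass to zero--cycles on $S\times S$. Recall that $CH_0(-)_{\QQ}$ --- which is $A^2$ for a surface, $A^4$ for a fourfold, $A^8$ for an eightfold, and so on --- is a birational invariant of smooth projective varieties, converts finite quotients into invariant subgroups (with $\QQ$--coefficients), and is unchanged by proper birational morphisms with rationally connected fibres; in particular by the crepant resolutions occurring here and by the Hilbert--Chow morphism $S^{[2]}\to\mathrm{Sym}^{2}S=(S\times S)/\tau$ (with $\tau$ swapping the two factors). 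Thus $X$ is birational to $(S\times S)/G$, where $G=\langle\iota\times\iota\rangle$ in case~(\rom2) and $G=\langle\iota\times\iota,\ \tau\rangle$ in case~(\rom1), so $A^4(X)\cong\bigl(A^4(S\times S)\bigr)^{G}$, and likewise $A^8(X\times X)\cong\bigl(A^8(S\times S\times S\times S)\bigr)^{G\times G}$. Since proper pushforward and flat pullback both respect exterior products, under these isomorphisms $a\times a'$ and $a'\times a$ correspond to $\alpha\times\alpha'$ and $\alpha'\times\alpha$, where $\alpha,\alpha'\in A^4(S\times S)$ are the degree--zero cycles corresponding to $a,a'$. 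Hence it suffices to prove $\alpha\times\alpha'=\alpha'\times\alpha$ in $A^8(S\times S\times S\times S)$.

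Next I would determine the shape of $\alpha$. As $S/\iota$ is a rational surface, the space $A^2(S)^{\iota}$ of $\iota$--invariant rational zero--cycles is one--dimensional, hence spanned by a degree--one cycle $o$; so $A^2(S)_{\QQ}=\QQ\,o\oplus A^2_{hom}(S)$ and, the invariant part containing no nonzero cycle of degree $0$, $\iota^{*}=-\mathrm{id}$ on $A^2_{hom}(S)$. Writing an arbitrary zero--cycle on $S\times S$ as a finite $\QQ$--combination of products $[s]\times[s']$ and decomposing each point class into $o$ plus a homologically trivial cycle, one obtains
\[ \alpha\ =\ c\,(o\times o)\,+\,(u\times o)\,+\,(o\times v)\,+\,w , \]
with $c\in\QQ$, $u,v\in A^2_{hom}(S)$, and $w$ in the image of $A^2_{hom}(S)\otimes A^2_{hom}(S)$ inside $A^4(S\times S)$. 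Here $\deg\alpha=0$ forces $c=0$, while $(\iota\times\iota)$--invariance of $\alpha$ --- together with the fact that $(\iota\times\iota)^{*}$ acts as $-1$ on $u\times o$ and on $o\times v$ and as $+1$ on $w$ --- forces $u=v=0$ after projecting to each factor. Hence $\alpha=\sum_{i}p_{i}\times q_{i}$ and $\alpha'=\sum_{j}r_{j}\times s_{j}$ with all $p_{i},q_{i},r_{j},s_{j}\in A^2_{hom}(S)$. (In case~(\rom1) one also intersects with the $\tau$--invariants, which by the next step imposes nothing. In motivic language this identifies $A^4_{hom}(X)$ with the summand $A^4\bigl(t_{2}(S)\otimes t_{2}(S)\bigr)$ of $A^4(S\times S)$, using the Chow--Künneth decomposition $\mathfrak h(S)=\mathbf 1\oplus\LL^{\oplus\rho}\oplus t_{2}(S)\oplus\LL^{2}$.)

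Finally I would conclude by invoking Voisin's conjecture for the K3 surface $S$ --- which holds for the present family of double planes; cf.\ \cite{V9}, \cite{moi} --- i.e.\ $p\times r=r\times p$ in $A^4(S\times S)$ for all $p,r\in A^2_{hom}(S)$. Writing $\beta^{(k)}$ for the pullback of a zero--cycle $\beta$ on $S$ under the $k$--th projection $S\times S\times S\times S\to S$, and pulling this identity back to the product of the first and third factors and multiplying through by $q^{(2)}\!\cdot s^{(4)}$, one gets $p^{(1)}q^{(2)}r^{(3)}s^{(4)}=r^{(1)}q^{(2)}p^{(3)}s^{(4)}$ in $A^8(S\times S\times S\times S)$; applying the same identity to $q,s$ on the second and fourth factors then gives $r^{(1)}q^{(2)}p^{(3)}s^{(4)}=r^{(1)}s^{(2)}p^{(3)}q^{(4)}$. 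Summing over $i,j$ yields $\alpha\times\alpha'=\alpha'\times\alpha$, and therefore $a\times a'=a'\times a$ in $A^8(X\times X)$.

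The real content of the argument is the input that Voisin's conjecture is known for the surface $S$; granted that, everything else is bookkeeping. The points that demand genuine care are the verifications that the crepant resolutions and finite quotients involved leave $CH_0(-)_{\QQ}$ intact (rational connectedness of the fibres, and $\QQ$--coefficients throughout), and that the identification $A^4(X)\cong(A^4(S\times S))^{G}$ is compatible with forming the self--product $X\times X$, so that $a\times a'$ really corresponds to $\alpha\times\alpha'$ and not to some other combination.
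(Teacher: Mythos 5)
Your argument is correct and follows the same overall strategy as the paper: reduce to the $K3$ surface $S$ by showing that the relevant degree--zero cycles on $X$ correspond to cycles in the image of $A^2_{hom}(S)\otimes A^2_{hom}(S)\to A^4(S\times S)$ --- the key point being $\iota^\ast=-\ide$ on $A^2_{hom}(S)$, which both you and the paper deduce from the rationality of $S/\iota$ --- and then invoke the known case of Voisin's conjecture for these double--plane $K3$ surfaces (\cite[Proposition 14]{moi}). The technical scaffolding differs in three places. First, you treat case (\rom1) directly through the Hilbert--Chow morphism and the group $\langle \iota\times\iota,\tau\rangle$, while the paper reduces (\rom1) to (\rom2) via the $2:1$ dominant rational map $X_2\dashrightarrow X_1$ and injectivity of pullback on zero--cycles; both work. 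Second, to see that the crepant resolution leaves $CH_0$ unchanged you appeal to rational chain connectedness of the fibres, which for resolutions of quotient (klt) singularities rests on a comparatively heavy result of Hacon--McKernan; the paper gets the same isomorphism more cheaply from Kimura's bivariant exact sequence $0\to A^i(Z)\to A^i(X)\oplus A^i(T)\to A^i(E)$ together with the observation that $A^4(T)=A^4(E)=0$ for dimension reasons --- you should either cite the resolution result precisely or switch to this dimension count, and do the same for the self--product $X\times X\to Z\times Z$. Third, you bypass the Shen--Vial MCK/Fourier machinery entirely: your decomposition $\alpha=c\,(o\times o)+u\times o+o\times v+w$, which needs only surjectivity of the exterior product map $A_0(S)\otimes A_0(S)\to A_0(S\times S)$ and one--dimensionality of $A^2(S)^{\iota}$, is a more elementary substitute for the paper's lemma identifying $q^\ast A^4_{hom}(Z)$ with $A^4_{(4)}(S\times S)$; and your explicit verification that $p\times q\times r\times s=r\times s\times p\times q$ via the projections onto factors $(1,3)$ and $(2,4)$ supplies the multilinear bookkeeping that the paper leaves implicit.
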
 
     
  It should be mentioned that theorem \ref{main} is not empty of content: in both cases (\rom1) and (\rom2), crepant resolutions of the quotient do indeed exist, thanks to work of Camere--Garbagnati--Mongardi \cite{CGM} (cf. subsection \ref{squot} below).  
  
  As a consequence of theorem \ref{main}, a certain instance of the generalized Hodge conjecture is verified:
  
  \begin{nonumberingc}[=corollary \ref{ghc}] Let $X$ be a Calabi--Yau fourfold as in theorem \ref{main}. Then the Hodge substructure
    \[ \wedge^2 H^4(X)\ \subset\ H^8(X\times X)\]
    is supported on a divisor.
    \end{nonumberingc}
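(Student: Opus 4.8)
I would prove corollary~\ref{ghc} by the Bloch--Srinivas method, following Voisin's derivation of the generalized Hodge conjecture from her conjecture \cite{V9}. First, a word on why the statement should hold: since $H^{8,0}(X\times X)=H^{4,0}(X)\otimes H^{4,0}(X)$ is one--dimensional and lies in the symmetric part of $H^4(X)\otimes H^4(X)$, the sub--Hodge structure $\wedge^2 H^4(X)\subset H^8(X\times X)$ has vanishing $(8,0)$--component, so the generalized Hodge conjecture predicts it is supported in codimension $\geq 1$; the task is to produce an actual divisor.

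The main step is to feed theorem~\ref{main} into a correspondence on $X^4=(X\times X)\times(X\times X)$. Writing the coordinates $x_1,\dots,x_4$, viewing $B:=\{(x_1,x_2)\}$ as base and $F:=\{(x_3,x_4)\}$ as fibre, and letting $\Delta_{ij}=\{x_i=x_j\}$, consider
\[
 Z:=\Delta_{13}\cdot\Delta_{24}-\Delta_{14}\cdot\Delta_{23}\ \in\ A^8(X^4)\ .
\]
As a self--correspondence of $X\times X$ this is $\ide-\sigma_\ast$, with $\sigma$ the involution exchanging the two factors; on the Künneth summand $H^4(X)\otimes H^4(X)$ of $H^8(X\times X)$ it sends $v\otimes w$ to $v\otimes w-w\otimes v$, whose image is precisely $\wedge^2 H^4(X)$. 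On the other hand, restricting the normalized version $(\Delta_{13}-\{x_3=o\})(\Delta_{24}-\{x_4=o\})-(\Delta_{14}-\{x_4=o\})(\Delta_{23}-\{x_3=o\})$ of $Z$ to the fibre over a point $(x,y)\in B(\C)$ yields the zero--cycle $(x-o)\times(y-o)-(y-o)\times(x-o)$ on $F$, which vanishes by theorem~\ref{main}. Expanding the brackets, this says $Z=\Gamma_0+Z'$ in $A^8(X^4)$, where $\Gamma_0$ is an explicit cycle supported on $B\times D_0$ with $D_0:=(X\times\{o\})\cup(\{o\}\times X)\subset F$, where $Z'$ restricts to $0$ in $A^8(F)_\QQ$ over every $\C$--point of $B$, and where $[\Gamma_0]_\ast$ kills $H^4(X)\otimes H^4(X)$ because each summand of $\Gamma_0$ factors through a constant $x_3$-- or $x_4$--coordinate.

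Then I would run Bloch--Srinivas. A spreading--out argument --- the locus $\{b\in B:\ Z'|_b=0\ \text{in}\ A^8(F)_\QQ\}$ is a countable union of Zariski--closed subsets, hence all of $B$ --- shows that $Z'$ restricts to $0$ over the generic point of $B$, so $Z'$ is supported on $D\times F$ for a divisor $D\subset B$. The construction is symmetric under exchanging $(x_1,x_2)$ with $(x_3,x_4)$, so applying the same reasoning to the other projection gives an explicit cycle $\Gamma_0'$ supported on $D_0\times F$, again killing $H^4(X)\otimes H^4(X)$, such that $R:=Z-\Gamma_0-\Gamma_0'$ restricts to $0$ over the generic point of $F$; hence $R$ is supported on $B\times D'$ for a divisor $D'\subset F=X\times X$. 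Since $[\Gamma_0]_\ast$ and $[\Gamma_0']_\ast$ vanish on $H^4(X)\otimes H^4(X)$, the restriction of $[R]_\ast$ to $H^4(X)\otimes H^4(X)$ still has image $\wedge^2 H^4(X)$, and the projection formula applied to $R$ supported on $B\times D'$ shows that $[R]_\ast\big(H^8(X\times X)\big)$ lies in the image of the Gysin map $H^6(\wt{D'})(-1)\to H^8(X\times X)$ for a resolution $\wt{D'}\to D'$. Thus $\wedge^2 H^4(X)$ is supported on the divisor $D'$, as desired.

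The hard part is not conceptual but purely bookkeeping: identifying the correction cycles $\Gamma_0,\Gamma_0'$ precisely and checking that their cohomological action on the summand $H^4(X)\otimes H^4(X)$ is trivial (so that nothing is lost in passing from $Z$ to $R$), together with the standard but delicate passage from the $\C$--pointwise statement of theorem~\ref{main} to the corresponding statement over the function field $\C(X\times X)$. Both are routine given the Bloch--Srinivas circle of ideas; alternatively, one may simply invoke the implication ``Voisin's conjecture $\Rightarrow$ generalized Hodge conjecture for $\wedge^2 H^n$'' from \cite{V9}.
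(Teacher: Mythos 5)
Your proposal is correct, and the overall strategy is the same as the paper's: translate theorem \ref{main} into the statement that an explicit antisymmetrizing correspondence on $(X\times X)\times(X\times X)$ kills zero--cycles fibrewise, apply the Bloch--Srinivas decomposition to support it on a divisor, and conclude via hard Lefschetz on a resolution of that divisor together with functoriality of the coniveau filtration. The one genuine difference is how you isolate the K\"unneth summand $H^4(X)\otimes H^4(X)$. The paper first verifies the Lefschetz standard conjecture $B(X)$ (via $B(S\times S)$ and the surfaces blown up in the resolution), so that the K\"unneth components $\pi^X_i$ are algebraic; it then works with $\Gamma=\tfrac12(\Delta_{X\times X}-\Gamma_\tau)\circ(\pi^X_4\times\pi^X_4)$, which kills all of $A^8(X\times X)$ because $A^4_{hom}(X)=(\pi^X_4)_\ast A^4(X)$, and feeds this directly into Bloch--Srinivas. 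You instead normalize $\Delta_{13}\cdot\Delta_{24}-\Delta_{14}\cdot\Delta_{23}$ by a base point $o$, and check by hand that the correction terms (each factoring through a projection of $X\times X$ onto one factor) annihilate $H^4(X)\otimes H^4(X)$; this buys you independence from $B(X)$ and from the algebraicity of K\"unneth components, at the cost of the bookkeeping you acknowledge. Both devices are standard and both work here; your two--step passage producing $R$ supported on $B\times D'$ is also fine, though note that the intermediate cycle $Z'$ supported on $D\times F$ already suffices, exactly as in the paper's treatment of its $\Gamma_1$: a correspondence supported on $D\times(X\times X)$ acts on $H^8$ through $H^8(\wt{D})=N^1H^8(\wt{D})$ for the sevenfold $\wt{D}$, so the second normalization is not strictly needed.
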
  
     
  The argument proving theorem \ref{main} is very simple, and goes as follows: thanks to the notion of {\em multiplicative Chow--K\"unneth decomposition\/}
  developed by Shen--Vial \cite{SV}, we are reduced to proving Voisin's conjecture holds for $K3$ surfaces $S$ as in theorem \ref{main}. This statement for $S$ was already known \cite{moi}.

  \vskip0.6cm

\begin{convention} In this note, the word {\sl variety\/} will refer to a reduced irreducible scheme of finite type over $\C$. 

All Chow groups will be with rational coefficients: For any variety $X$, we will denote by $A_j(X)$ the Chow group of $j$--dimensional cycles on $X$ with $\QQ$--coefficients.
For $X$ smooth of dimension $n$ the notations $A_j(X)$ and $A^{n-j}(X)$ will be used interchangeably. 

The notations 
$A^j_{hom}(X)$ and $A^j_{AJ}(X)$ will be used to indicate the subgroups of 
homologically, resp. Abel--Jacobi trivial cycles.
For a morphism $f\colon X\to Y$, we will write $\Gamma_f\in A_\ast(X\times Y)$ for the graph of $f$.
The contravariant category of Chow motives (i.e., pure motives with respect to rational equivalence as in \cite{Sch}, \cite{MNP}) will be denoted $\MM_{\rm rat}$. 

We will write $H^j(X)$ to indicate singular cohomology $H^j(X,\QQ)$.
\end{convention}

 \section{Preparatory material}

 \subsection{Quotient varieties}
 
 \begin{definition} A {\em projective quotient variety\/} is a variety $Y=X/G$,
 where $X$ is a smooth projective variety and $G$ is a finite group of automorphisms of $X$.
  \end{definition}
  
 \begin{proposition}[Fulton \cite{F}]\label{opquot} Let $Y$ be a projective quotient variety of dimension $n$. Let $A^\ast(Y)$ denote the operational Chow cohomology ring. The natural map
   \[ A^i(Y)\ \to\ A_{n-i}(Y) \]
   is an isomorphism for all $i$.
   \end{proposition}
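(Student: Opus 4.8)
The plan is to transport the statement along the defining smooth cover and then average over the group. Write $Y=X/G$ with quotient morphism $\pi\colon X\to Y$, where $X$ is smooth projective of dimension $n$ and $G$ is the given finite group of automorphisms, so that $\pi$ is finite surjective of degree $d:=|G|$. The map in the statement is the one attached by Fulton's bivariant formalism to any variety, $c\mapsto c\cap[Y]$. For the \emph{smooth} variety $X$ the corresponding map $A^i(X)\to A_{n-i}(X)$ is an isomorphism --- this is the classical fact that the operational Chow cohomology of a smooth variety is its Chow ring (\cite{F}, Corollary~17.4) --- and it is $G$--equivariant, hence restricts to an isomorphism on $G$--invariants. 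The idea is to squeeze the map for $Y$ between two copies of this isomorphism for $X$.

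Concretely, I would contemplate the square
\[
\begin{array}{ccc}
A^i(Y) & \longrightarrow & A_{n-i}(Y)\\
\downarrow & & \downarrow\\
A^i(X) & \longrightarrow & A_{n-i}(X)
\end{array}
\]
in which the horizontal maps are $c\mapsto c\cap[\,\cdot\,]$ and both vertical maps are pullback along $\pi$, normalized so that $\pi_\ast\pi^\ast=d\cdot\ide$ and $\pi^\ast\pi_\ast=\sum_{g\in G}g_\ast$ (equivalently, $\tfrac1d\pi^\ast$ inverts $\pi_\ast$ on invariants). Several things must be checked. First, the square commutes: from $\pi_\ast[X]=d\,[Y]$ one gets $\pi^\ast[Y]=[X]$, and the bivariant compatibility of an operational class with proper pushforward, applied to $[X]$, yields $\pi^\ast(c\cap[Y])=(\pi^\ast c)\cap[X]$ after dividing by $d$ and using $G$--invariance of $(\pi^\ast c)\cap[X]$. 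Second, on Chow homology $\pi_\ast$ restricts to an isomorphism $A_{n-i}(X)^G\xrightarrow{\ \sim\ }A_{n-i}(Y)$, a standard fact for finite quotients (\cite{F}, Example~1.7.6), so the right vertical map identifies $A_{n-i}(Y)$ with $A_{n-i}(X)^G$. If the left vertical map likewise identifies $A^i(Y)$ with $A^i(X)^G$, then --- the bottom map being a $G$--equivariant isomorphism --- a diagram chase forces the top map to be an isomorphism, which is what we want.

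The technical heart, and the step I expect to be the real obstacle, is the operational analogue of \cite{F}, Example~1.7.6: that $\pi^\ast\colon A^i(Y)\to A^i(X)^G$ is an isomorphism after tensoring with $\QQ$. The difficulty is that $\pi$ is in general \emph{not flat} --- this already fails for the simplest surface quotient singularities --- so $\pi$ carries no naive orientation class and one cannot directly write down a pushforward on operational cohomology with which to average. This is precisely the point handled by the Fulton--MacPherson bivariant formalism as developed by Vistoli and Kimura: a finite surjective morphism onto a variety carries a well-behaved bivariant class once the integers are inverted, the smoothness of $X$ supplies refined Gysin maps, and the relations $\pi_\ast\pi^\ast=d\cdot\ide$ and $\pi^\ast\pi_\ast=\sum_g g^\ast$ on operational cohomology then drop out of the axioms; I would invoke this machinery --- which is exactly what underlies the cited theorem of Fulton --- rather than redevelop it. (Injectivity of $\pi^\ast$ on operational cohomology is in fact elementary: a class killed by $\pi^\ast$ acts as zero on $A_\ast$ of every $Y$--scheme $T\times_Y X$, hence, since $T\times_Y X\to T$ is finite surjective, on the $\QQ$--direct summand $A_\ast(T)$ for every $Y$--scheme $T$; it is surjectivity onto the invariants that genuinely needs the averaging supplied by the bivariant pushforward.)
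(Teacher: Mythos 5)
The paper's ``proof'' of this proposition is a one--line citation to \cite[Example 17.4.10]{F}, so there is nothing to compare against in detail; your proposal correctly reconstructs the standard argument behind that citation (identify $A_{n-i}(Y)_{\QQ}$ with $A_{n-i}(X)^G_{\QQ}$ via $\pi_\ast$, identify $A^i(Y)_{\QQ}$ with $A^i(X)^G_{\QQ}$ via $\pi^\ast$ and the bivariant transfer, and sandwich the cap product with $[Y]$ between these and Poincar\'e duality on the smooth cover $X$). The one genuinely technical step --- surjectivity of $\pi^\ast$ onto $G$--invariants of operational cohomology, which needs a pushforward on bivariant classes along the non--flat finite map $\pi$ --- is deferred rather than proved, but you identify it accurately and it is precisely the content of the machinery underlying Fulton's example, so this is an acceptable level of citation, consistent with the paper's own.
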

   
   \begin{proof} This is \cite[Example 17.4.10]{F}.
      \end{proof}

\begin{remark} It follows from proposition \ref{opquot} that the formalism of correspondences goes through unchanged for projective quotient varieties (this is also noted in \cite[Example 16.1.13]{F}). 
  \end{remark}

 \subsection{MCK decomposition}
 
  \begin{definition}[Murre \cite{Mur}]\label{ck} Let $X$ be a smooth projective variety of dimension $n$. We say that $X$ has a {\em CK decomposition\/} if there exists a decomposition of the diagonal
   \[ \Delta_X= \pi_0+ \pi_1+\cdots +\pi_{2n}\ \ \ \hbox{in}\ A^n(X\times X)\ ,\]
  such that the $\pi_i$ are mutually orthogonal idempotents and $(\pi_i)_\ast H^\ast(X)= H^i(X)$.
  
  (NB: ``CK decomposition'' is short--hand for ``Chow--K\"unneth decomposition''.)
\end{definition}

\begin{remark} The existence of a CK decomposition for any smooth projective variety is part of Murre's conjectures \cite{Mur}, \cite{J2}. 
\end{remark}

\begin{definition}[Shen--Vial \cite{SV}]\label{mck} Let $X$ be a smooth projective variety of dimension $n$. Let $\Delta_X^{sm}\in A^{2n}(X\times X\times X)$ be the class of the small diagonal
  \[ \Delta_X^{sm}:=\bigl\{ (x,x,x)\ \vert\ x\in X\bigr\}\ \subset\ X\times X\times X\ .\]
  An MCK decomposition is a CK decomposition $\{\pi_i\}$ of $X$ that is {\em multiplicative\/}, i.e. it satisfies
  \[ \pi_k\circ \Delta_X^{sm}\circ (\pi_i\times \pi_j)=0\ \ \ \hbox{in}\ A^{2n}(X\times X\times X)\ \ \ \hbox{for\ all\ }i+j\not=k\ .\]
  
 (NB: ``MCK decomposition'' is short--hand for ``multiplicative Chow--K\"unneth decomposition''.) 
  \end{definition}
  
  \begin{remark} The small diagonal (seen as a correspondence from $X\times X$ to $X$) induces the {\em multiplication morphism\/}
    \[ \Delta_X^{sm}\colon\ \  h(X)\otimes h(X)\ \to\ h(X)\ \ \ \hbox{in}\ \MM_{\rm rat}\ .\]
 Suppose $X$ has a CK decomposition
  \[ h(X)=\bigoplus_{i=0}^{2n} h^i(X)\ \ \ \hbox{in}\ \MM_{\rm rat}\ .\]
  By definition, this decomposition is multiplicative if for any $i,j$ the composition
  \[ h^i(X)\otimes h^j(X)\ \to\ h(X)\otimes h(X)\ \xrightarrow{\Delta_X^{sm}}\ h(X)\ \ \ \hbox{in}\ \MM_{\rm rat}\]
  factors through $h^{i+j}(X)$.
  It follows that if $X$ has an MCK decomposition, then setting
    \[ A^i_{(j)}(X):= (\pi^X_{2i-j})_\ast A^i(X) \ ,\]
    one obtains a bigraded ring structure on the Chow ring: that is, the intersection product has the property that 
    \[  \ima \Bigl(A^i_{(j)}(X)\otimes A^{i^\prime}_{(j^\prime)}(X) \xrightarrow{\cdot} A^{i+i^\prime}(X)\Bigr)\ \subset\  A^{i+i^\prime}_{(j+j^\prime)}(X)\ .\]
    It is expected that for any $X$ with an MCK decomposition, one has
    \[ A^i_{(j)}(X)\stackrel{??}{=}0\ \ \ \hbox{for}\ j<0\ ,\ \ \ A^i_{(0)}(X)\cap A^i_{hom}(X)\stackrel{??}{=}0\ ;\]
    this is related to Murre's conjectures B and D \cite{Mur}.

  The property of having an MCK decomposition is severely restrictive, and is closely related to Beauville's ``(weak) splitting property'' \cite{Beau3}. For more ample discussion, and examples of varieties with an MCK decomposition, we refer to \cite[Chapter 8]{SV}, as well as \cite{V6}, \cite{SV2}, \cite{FTV}.
    \end{remark}

In this note, we will rely on the following result:

\begin{theorem}[Shen--Vial \cite{SV}]\label{hilb2} Let $S$ be a $K3$ surface. Then $S$ and $S^2$ and the Hilbert scheme $S^{[2]}$ have an MCK decomposition. Moreover, the bigraded ring structure on the Chow ring $A^\ast_{(\ast)}(S^{[2]})$ coincides with the Fourier decomposition as defined in \cite{SV}.
\end{theorem}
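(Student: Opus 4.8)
The plan is to exhibit, for each of $S$, $S^2$, $S^{[2]}$, an explicit Chow--K\"unneth decomposition and then verify the multiplicativity of Definition \ref{mck}. \textbf{The surface.} For a $K3$ surface $S$ one uses the \emph{Beauville--Voisin zero--cycle} $o_S\in A^2(S)$ --- the class of any point lying on a rational curve, which is independent of the curve --- and sets
\[ \pi^S_0:=o_S\times S,\qquad \pi^S_4:=S\times o_S,\qquad \pi^S_2:=\Delta_S-\pi^S_0-\pi^S_4\quad\hbox{in }A^2(S\times S)\ . \]
One checks immediately that these are mutually orthogonal idempotents acting on $H^\ast(S)$ as the projectors onto $H^0$, $H^2$, $H^4$, so $S$ has a CK decomposition. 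Multiplicativity amounts to $\pi^S_k\circ\Delta_S^{sm}\circ(\pi^S_i\times\pi^S_j)=0$ for $i+j\neq k$; unravelling the definitions, the only terms that are not patently zero reduce to the two Beauville--Voisin identities: $D\cdot D'\in\QQ\,o_S$ for all divisors $D,D'$ on $S$, and $c_2(T_S)=24\,o_S$ in $A^2(S)$. Both hold, so $\{\pi^S_i\}$ is an MCK decomposition. (For a $K3$ surface, ``having an MCK decomposition'' is thus essentially a restatement of the Beauville--Voisin theorem.)

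\textbf{The self--product.} Here I would use the general principle that MCK decompositions are stable under self--products: if $\{\pi^M_i\}$ is an MCK decomposition of a smooth projective $M$, then $\bigl\{\sum_{i+j=k}\pi^M_i\times\pi^M_j\bigr\}_k$ is one for $M\times M$, the multiplicativity being a formal consequence of that of $\{\pi^M_i\}$, using that the small diagonal of $M\times M$ is obtained from $\Delta_M^{sm}$ by duplicating and permuting factors. Taking $M=S$ settles $S^2$. \textbf{The Hilbert scheme.} For $S^{[2]}$ I would use the description $S^{[2]}=\wt{S^2}/\mathfrak S_2$, where $\rho\colon\wt{S^2}\to S^2$ is the blow--up of the diagonal $\Delta\cong S$ and $\mathfrak S_2$ acts by the lift of the factor swap, together with two further ingredients. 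First, the blow--up formula gives an isomorphism of Chow motives $h(\wt{S^2})=h(S^2)\oplus h(S)(-1)$, which carries the (shifted) MCK decompositions of $S^2$ and $S$ to a CK decomposition of $\wt{S^2}$; its multiplicativity is checked directly, the new ``excess'' contributions being expressible through self--intersections of $\Delta$, hence through $c_2(T_S)$ and products of divisors on $S$, i.e. again through Beauville--Voisin. Second, if a finite group $G$ acts on an MCK variety $N$ by automorphisms respecting its decomposition, then $N/G$ inherits an MCK decomposition, the averaging projector $\tfrac1{|G|}\sum_{g\in G}\Gamma_g$ being idempotent, commuting with the $\pi^N_i$, and multiplicative with respect to $\Delta_N^{sm}$. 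Applying this with $N=\wt{S^2}$ and $G=\mathfrak S_2$ yields an MCK decomposition of $S^{[2]}$.

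For the \emph{moreover} clause one writes the Beauville--Bogomolov class $\mathfrak B\in A^2(S^{[2]}\times S^{[2]})$ explicitly in terms of the correspondences $\rho$ and $\wt{S^2}\to S^{[2]}$, computes the action of the Fourier transform $\exp(\mathfrak B)$ on $h(S^2)\oplus h(S)(-1)$, and checks that its eigenspaces are precisely the summands $h^i(S^{[2]})$ of the decomposition just built, with the eigenvalues recorded in \cite{SV}. \textbf{The main obstacle.} The serious work is entirely in the $S^{[2]}$ case: both the multiplicativity of the blown--up/quotient decomposition and its identification with the Fourier decomposition require precise control of the intersection product on $A^\ast(S^{[2]})$. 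That product is accessible via Nakajima--type incidence correspondences --- in the spirit of the cohomological computations of Lehn--Sorger and Li--Qin--Wang, lifted to Chow groups --- but assembling these computations is long; by contrast, everything upstream of the Beauville--Voisin relations on $S$ is routine correspondence bookkeeping. This is precisely the content of \cite{SV}.
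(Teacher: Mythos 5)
The paper does not reprove this statement at all: its ``proof'' consists of four citations to \cite{SV} (Example 8.17 for $S$, Theorem 8.6 for products, Theorem 13.4 for $S^{[2]}$, Theorem 15.8 for the comparison with the Fourier decomposition). Your outline correctly reconstructs the architecture of the argument that sits behind those citations, so at the level of strategy you and the paper are pointing at the same proof. Two points in your reduction are nevertheless inaccurate as stated. First, for $S$ itself: multiplicativity of the Beauville--Voisin projectors does \emph{not} reduce to the two identities $D\cdot D'\in\QQ\,o_S$ and $c_2(T_S)=24\,o_S$. Expanding $\pi^S_k\circ\Delta_S^{sm}\circ(\pi^S_i\times\pi^S_j)$ for $i+j\neq k$, the term $\Delta_S^{sm}\circ(\Delta_S\times\Delta_S)=\Delta_S^{sm}$ survives with coefficient $1$, and the vanishing of all off-diagonal pieces is equivalent to the cubic relation
\[ \Delta_S^{sm}=p_{12}^\ast\Delta_S\cdot p_3^\ast o_S+p_{13}^\ast\Delta_S\cdot p_2^\ast o_S+p_{23}^\ast\Delta_S\cdot p_1^\ast o_S-p_1^\ast o_S\cdot p_2^\ast o_S-p_1^\ast o_S\cdot p_3^\ast o_S-p_2^\ast o_S\cdot p_3^\ast o_S\ \ \hbox{in}\ A^4(S^3), \]
which is the main theorem of \cite{BV} and strictly stronger than (indeed, it implies) the two quadratic identities you invoke. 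Second, the quotient step is not as formal as you present it: for $p\colon N\to N/G$ one has $(p\times p\times p)^\ast\Delta^{sm}_{N/G}=\sum_{g,h\in G}\{(x,gx,hx)\}$, a sum of \emph{twisted} small diagonals, so $G$-invariance of the $\pi^N_i$ alone does not give multiplicativity downstairs; one needs control of these twisted diagonals (for $\widetilde{S^2}/\mathfrak{S}_2$ this is exactly where the long computations of Part 2 of \cite{SV} live). Finally, you are candid that the multiplicativity on $S^{[2]}$ and the identification with the Fourier decomposition are left as ``long computations''; that is honest, but it means your text is an outline of \cite{SV} rather than a self-contained proof --- which, to be fair, is no less than what the paper itself offers.
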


\begin{proof} For $S$, this is \cite[Example 8.17]{SV}. The statement for $S^2$ follows because the property of having an MCK decomposition is stable under products \cite[Theorem 8.6]{SV}. The statement for $S^{[2]}$ is \cite[Theorem 13.4]{SV} (or alternatively \cite{V6}). Finally, the relation with the Fourier decomposition for $A^\ast(S^{[2]})$ is \cite[Theorem 15.8]{SV}.

\end{proof}

 \subsection{Calabi--Yau quotients}\label{squot}
 
 \begin{definition} A {\em Calabi--Yau variety\/} is a smooth projective variety $X$ such that the canonical bundle $K_X$ is numerically trivial, and $h^{j,0}(X)=0$ for all $0<j<n$.
 \end{definition}
 
 \begin{definition}[Beauville \cite{Beau0}, \cite{Beau1}] A {\em hyperk\"ahler variety\/} is a smooth projective simply--connected variety $X$, such that $H^{2,0}(X)$ is generated by a symplectic form.
 \end{definition}

 \begin{theorem}[Camere--Garbagnati--Mongardi \cite{CGM}]\label{cgm} Let $Y$ be a hyperk\"ahler fourfold, and $\iota\colon Y\to Y$
 a non--symplectic involution. Assume that all irreducible components of the fixed locus of $\iota$ have dimension $2$. Then
 there exists a resolution of singularities
   \[ X\ \to\ Y/\iota\]
   such that $X$ is a Calabi--Yau variety.
 \end{theorem}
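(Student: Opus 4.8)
The plan is to pin down the singularities of $Y/\iota$ exactly, observe that a single blow-up resolves them, and then check that the resulting variety has numerically trivial canonical class together with the required vanishing $h^{j,0}=0$ for $0<j<4$. The first step is purely local. Since $\iota$ has finite order its fixed locus $F:=Y^\iota$ is smooth, and near a point $p\in F$ the automorphism $\iota$ is analytically conjugate to the linear involution $d\iota_p$ of $T_pY\cong\C^4$, whose $(+1)$-eigenspace is $T_pF$. Because every component of $F$ has dimension $2$, the eigenvalues of $d\iota_p$ are $(+1,+1,-1,-1)$, so in suitable local coordinates $\iota$ is $(x_1,x_2,x_3,x_4)\mapsto(x_1,x_2,-x_3,-x_4)$ and $Y/\iota$ is locally isomorphic to $\C^2\times(\C^2/\{\pm1\})$, i.e. a smooth surface times an $A_1$-surface singularity.

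Next I would construct the resolution. Let $\wt Y$ be the blow-up of $Y$ along $F$, with exceptional divisor $E=\PP(N_{F/Y})$, a $\PP^1$-bundle over $F$. Since $\iota$ preserves $F$ it lifts to an involution $\wt\iota$ of $\wt Y$; as $d\iota$ acts on the rank-two bundle $N_{F/Y}$ as $-\mathrm{id}$, the induced action on $\PP(N_{F/Y})$ is trivial, so $E$ is pointwise fixed, and $\wt\iota$ has no other fixed points because $\wt Y\setminus E\to Y\setminus F$ is an isomorphism. In the chart with coordinates $(x_1,x_2,x_3,t=x_4/x_3)$ above, $\wt\iota$ is the reflection $x_3\mapsto-x_3$, so $\mathrm{Fix}(\wt\iota)=E$ is a smooth divisor and the quotient $X:=\wt Y/\wt\iota$ is smooth projective. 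Writing $q\colon\wt Y\to X$, $\pi\colon\wt Y\to Y$ and $p\colon Y\to Y/\iota$ for the evident maps, the $\wt\iota$-invariant morphism $p\circ\pi$ descends to $\rho\colon X\to Y/\iota$; comparing degrees ($\deg q=\deg p=2$, $\deg\pi=1$) gives $\deg\rho=1$, so $\rho$ is birational and hence a resolution of singularities.

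Then I would verify $X$ is Calabi--Yau. Since $F$ has codimension $2$, $K_{\wt Y}=\pi^\ast K_Y+E$, and since $q$ is a double cover totally ramified along $E$, $K_{\wt Y}=q^\ast K_X+E$; subtracting and using that $K_Y$ is numerically trivial gives $q^\ast K_X\equiv0$, hence $K_X\equiv0$. (In fact $p$ has no ramification divisor, so $K_Y=p^\ast K_{Y/\iota}$ and the same comparison yields $K_X=\rho^\ast K_{Y/\iota}$, so $\rho$ is even crepant.) For the Hodge numbers I would use $H^j(\wt Y)=H^j(Y)\oplus H^{j-2}(F)(-1)$ together with $H^{j,0}(X)=H^{j,0}(\wt Y)^{\wt\iota}$. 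For $j=1$: $H^1(\wt Y)=H^1(Y)=0$. For $j=3$: $H^{3,0}(\wt Y)=H^{3,0}(Y)=0$, since the extra summand $H^1(F)(-1)$ carries no $(3,0)$-part. For $j=2$: the extra summand $H^0(F)(-1)$ is of type $(1,1)$, so $H^{2,0}(\wt Y)=H^{2,0}(Y)=\C\sigma$, on which $\wt\iota$ acts through $\iota^\ast$; as $\iota$ is non-symplectic, $\iota^\ast\sigma=-\sigma$, whence $H^{2,0}(X)=0$. Thus $h^{j,0}(X)=0$ for $0<j<4$ and $K_X$ is numerically trivial, as required.

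The step needing most care is the local analysis: one must be sure the \emph{only} singularities of $Y/\iota$ are these transverse $A_1$'s, and this is exactly where the hypothesis that every component of $\mathrm{Fix}(\iota)$ be a surface is indispensable, since an isolated fixed point would produce the singularity $\C^4/\{\pm1\}$, for which blowing up the point is no longer crepant. The remaining subtlety is the canonical-divisor bookkeeping on the $\QQ$-Gorenstein variety $Y/\iota$ (the discrepancy and ramification identities above), which has to be carried out with $\QQ$-divisor classes; everything else is routine.
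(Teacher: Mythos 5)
Your argument is correct and complete. Note, however, that the paper does not actually prove this statement: its ``proof'' consists of the single sentence citing \cite[Theorem 3.7]{CGM}, so you have supplied the argument that the paper outsources. What you wrote is essentially the standard (and, as far as I can tell, the intended) proof: the hypothesis on the fixed locus forces the local model $(x_1,x_2,x_3,x_4)\mapsto(x_1,x_2,-x_3,-x_4)$, hence transverse $A_1$ singularities; blowing up $F$ turns the fixed locus into a smooth divisor $E=\PP(N_{F/Y})$, so $X=\wt Y/\wt\iota$ is smooth; the two discrepancy computations $K_{\wt Y}=\pi^\ast K_Y+E=q^\ast K_X+E$ cancel to give $K_X\equiv 0$ (and crepancy of $\rho$, which the paper's main theorem implicitly uses when it speaks of ``crepant resolutions''); and the blow-up formula plus non-symplecticity of $\iota$ kill $h^{1,0}$, $h^{2,0}$, $h^{3,0}$. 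All the individual steps check out: the smoothness of $F$ and the eigenvalue count $(+1,+1,-1,-1)$, the triviality of the $\wt\iota$-action on $\PP(N_{F/Y})$ because $d\iota=-\ide$ on the normal bundle, the degree count showing $\rho$ is birational, and the identification $H^{j,0}(X)=H^{j,0}(\wt Y)^{\wt\iota}$. The only cosmetic omissions are the remark that $X$ is projective (being a finite quotient of the projective variety $\wt Y$) and, if one wants to be pedantic, the vacuous case $F=\emptyset$; neither affects the argument.
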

 
 \begin{proof} This is the dimension $4$ case of \cite[Theorem 3.7]{CGM}.
 \end{proof}
 
 \begin{corollary}[Camere--Garbagnati--Mongardi \cite{CGM}]\label{cgm2} Let $S$ be a $K3$ surface obtained as (a desingularization of) a double plane branched along the  union of a smooth quartic and a smooth quadric. Let $Y$ be the Hilbert scheme $Y:=S^{[2]}$. Let $\iota^{[2]}$ be the natural involution of $Y$ induced by the covering involution $\iota$ of $S$. 
 Then
 there exists a resolution of singularities
   \[ X\ \to\ Y/\iota\]
   such that $X$ is a Calabi--Yau variety.
   \end{corollary}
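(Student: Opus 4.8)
The plan is to deduce Corollary~\ref{cgm2} from Theorem~\ref{cgm}, by checking its hypotheses for the hyperk\"ahler fourfold $Y=S^{[2]}$ equipped with the involution $\iota^{[2]}$. Concretely, one must verify: (1) $Y=S^{[2]}$ is a hyperk\"ahler variety of dimension $4$; (2) $\iota^{[2]}$ is non--symplectic; and (3) every irreducible component of the fixed locus $\mathrm{Fix}(\iota^{[2]})\subset S^{[2]}$ has dimension $2$. Granting these, Theorem~\ref{cgm} produces a resolution $X\to Y/\iota^{[2]}$ with $X$ a Calabi--Yau variety, which is precisely the assertion.

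Points (1) and (2) are immediate. That $S^{[2]}$ is a hyperk\"ahler fourfold is Beauville's theorem \cite{Beau0}, \cite{Beau1}. For (2): the covering involution $\iota$ of the double plane is anti--symplectic, since $S/\iota$ is birational to $\mathbb{P}^2$ and so carries no nonzero holomorphic $2$--form, whence $\iota^\ast=-\mathrm{id}$ on $H^{2,0}(S)$; as the natural isomorphism $H^{2,0}(S^{[2]})\xrightarrow{\ \sim\ }H^{2,0}(S)$ intertwines $(\iota^{[2]})^\ast$ with $\iota^\ast$, we get $(\iota^{[2]})^\ast=-\mathrm{id}$ on $H^{2,0}(S^{[2]})$, i.e.\ $\iota^{[2]}$ is non--symplectic.

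The real content is point (3). I would first recall the structure of $C:=\mathrm{Fix}(\iota)\subset S$: it is the strict transform, under the desingularization, of the ramification curve of $S\to\mathbb{P}^2$, and being the fixed locus of an anti--symplectic involution on a $K3$ surface it is a disjoint union of smooth curves --- in particular of pure dimension $1$ --- while the exceptional curves of the desingularization are not contained in it. Then I would describe $\mathrm{Fix}(\iota^{[2]})$ by classifying length--$2$ subschemes $Z\subset S$ with $\iota(Z)=Z$ into three types: (a) $Z=\{p,q\}$ reduced with $p,q\in C$; (b) $Z=\{p,\iota(p)\}$ with $p\in S\setminus C$; (c) $Z$ non--reduced, supported at some $p\in C$. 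The subschemes of type (a), together with the non--reduced subschemes contained scheme--theoretically in $C$, form the Hilbert scheme $C^{[2]}$, a (possibly disconnected) surface; the subschemes of type (b) form a locus isomorphic to the open subset $(S\setminus C)/\iota$ of the quotient surface $S/\iota$, whose closure in $S^{[2]}$ is again a surface; and every remaining $Z$ of type (c) lies in one of these two closures. This last point is where anti--symplecticity is used: at a fixed point $p\in C$ the differential $d\iota_p$ has eigenvalues $+1$ (along $T_pC$) and $-1$ (along the normal direction to $C$), so an $\iota^{[2]}$--fixed non--reduced $Z$ at $p$ has tangent direction either along $C$, in which case $Z\in C^{[2]}$, or normal to $C$, in which case $Z$ is a limit of subschemes $\{p',\iota(p')\}$ of type (b). Hence $\mathrm{Fix}(\iota^{[2]})$ is a finite union of irreducible surfaces, the hypotheses of Theorem~\ref{cgm} are satisfied, and the corollary follows.

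I expect step (3) to be the main obstacle: one must ensure that no lower--dimensional components --- isolated points, or curves --- occur in $\mathrm{Fix}(\iota^{[2]})$, and what rules this out is exactly that $\iota$ is anti--symplectic, so that $C=\mathrm{Fix}(\iota)$ is a smooth curve with no isolated fixed points and the eigenvalues of $d\iota_p$ are as stated. Everything else is either formal or already recorded in \cite{Beau0}, \cite{Beau1}, \cite{CGM}.
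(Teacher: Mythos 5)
Your proposal is correct and follows the same route as the paper: apply Theorem~\ref{cgm} to the pair $(S^{[2]},\iota^{[2]})$, the only substantive hypothesis being that every component of $\mathrm{Fix}(\iota^{[2]})$ is a surface. The paper disposes of that hypothesis in one line by citing \cite[Section 5.1]{CGM}, whereas you verify it explicitly --- and correctly, via the classification of invariant length--$2$ subschemes and the $\pm1$ eigenvalue analysis of $d\iota_p$ along $C=\mathrm{Fix}(\iota)$ --- so your write--up is simply a more self--contained version of the same argument.
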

   
 \begin{proof} The condition in theorem \ref{cgm} on the dimension of the fixed locus is verified by what is detailed in \cite[Section 5.1]{CGM}.
   \end{proof}

  \begin{remark}\label{dom} Let $S$ and $X$ be a $K3$ surface resp. a Calabi--Yau fourfold as in corollary \ref{cgm2}. As explained in \cite[Section 5.2]{CGM}, there exists a resolution
   \[ X_2\ \to\ (S\times S)/(\iota\times \iota)\ ,\]
   such that $X_2$ is also a Calabi--Yau variety, and there is a rational $2:1$ map $X_2\dashrightarrow X$.
  \end{remark}

 \begin{remark}\label{pq} In \cite[Section 5.1 and Appendix 8]{CGM}, the Hodge numbers of Calabi--Yau resolutions $X$ as in corollary \ref{cgm2} are computed. Likewise, the Hodge numbers of the Calabi--Yau resolutions of the quotient $(S\times S)/(\iota\times\iota)$ are computed in \cite[Section 5.2]{CGM}.
 \end{remark}

 \section{Main result}
 
 \begin{theorem}\label{main} Let $S$ be a $K3$ surface obtained as (a desingularization of) a double plane branched along the  union of a smooth quartic and a smooth quadric. Let $\iota\colon S\to S$ be the covering involution. Let $X$ be one of the following:
 
 \noindent
 (\rom1) a smooth Calabi--Yau fourfold obtained as a crepant resolution of the quotient $S^{[2]}/\iota^{[2]}$;

 \noindent
 (\rom2) a smooth Calabi--Yau fourfold obtained as a crepant resolution of the quotient $(S\times S)/(\iota\times\iota)$.
  
  Then any $a,a^\prime\in A^4_{hom}(X)$ satisfy the equality
   \[ a\times a^\prime - a^\prime\times a =0\ \ \ \hbox{in}\ A^8(X\times X)\ .\]
   \end{theorem}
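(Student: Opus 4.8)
The plan is to use the multiplicative Chow--K\"unneth formalism (theorem \ref{hilb2}) to reduce the statement to Voisin's conjecture for the $K3$ surface $S$ itself, which is available by \cite{moi}. First I would reduce case (\rom1) to case (\rom2): by remark \ref{dom} there is a rational $2:1$ map $X_2\dashrightarrow X$, where $X_2$ is a crepant resolution of $(S\times S)/(\iota\times\iota)$; resolving its indeterminacy yields a generically finite degree--$2$ morphism $f\colon \wt X\to X$ from a smooth projective variety $\wt X$ birational to $X_2$, and $\tfrac12\,{}^t\Gamma_f$ exhibits $A^4_{hom}(X)$ as a direct summand of $A^4_{hom}(\wt X)\cong A^4_{hom}(X_2)$ compatibly with exterior products, so (\rom2) implies (\rom1). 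Assume henceforth $X$ is a crepant resolution of $Z:=(S\times S)/(\iota\times\iota)$. The fixed locus of $\iota\times\iota$ is $C\times C$, where $C\subset S$ is the smooth fixed curve of $\iota$, and $\iota\times\iota$ acts by $-\ide$ on the rank--$2$ bundle $N_{C\times C/S\times S}$; hence blowing up $C\times C$ inside $S\times S$ and then passing to the quotient by the (lifted) involution resolves the $A_1$--singularities of $Z$ and produces a crepant resolution which, being birational to $X$, has the same $CH_0$; moreover the involution acts trivially on the exceptional divisor $\PP(N_{C\times C/S\times S})$. The blow--up formula then gives
  \[ h(X)\ \cong\ \bigl(h(S\times S)\bigr)^{\iota\times\iota}\ \oplus\ h(C\times C)(-1)\ \ \ \hbox{in}\ \MM_{\rm rat}\ ,\]
and since $A^3(C\times C)=0$ this identifies $A^4_{hom}(X)$, via a correspondence and so compatibly with $\times$, with $A^4_{hom}(S\times S)^{\iota\times\iota}$. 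It therefore remains to prove $b\times b^\prime=b^\prime\times b$ in $A^8(S^{\times 4})$ for all $b,b^\prime\in A^4_{hom}(S\times S)^{\iota\times\iota}$.

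Next I would bring in the MCK decomposition of $S\times S$ (theorem \ref{hilb2}) and its associated bigrading. Since $H^{\mathrm{odd}}(S)=0$, one has $A^4_{hom}(S\times S)=A^4_{(2)}(S\times S)\oplus A^4_{(4)}(S\times S)$, the $(0)$--piece being $\QQ\cdot(o\times o)$, which meets $A^4_{hom}$ trivially ($o$ denoting the Beauville--Voisin class). The MCK of $S$ has $A^2(S)=\QQ\, o\oplus A^2_{hom}(S)$, with $\iota$ acting by $-\ide$ on $A^2_{hom}(S)$ (this, too, is contained in \cite{moi}; essentially because $S/\iota$ is rational) and fixing $o$; hence $\iota\times\iota$ acts by $-\ide$ on $A^4_{(2)}(S\times S)=\{\,c\times o+o\times c^\prime\,\}$ and by $+\ide$ on $A^4_{(4)}(S\times S)$, so that $A^4_{hom}(S\times S)^{\iota\times\iota}=A^4_{(4)}(S\times S)$. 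Finally, $A_0(S\times S)$ is generated by classes $[x]\times[y]$, whose component in $A^4_{(4)}(S\times S)$ is $([x]-o)\times([y]-o)$, with $[x]-o,[y]-o\in A^2_{hom}(S)$; therefore every element of $A^4_{(4)}(S\times S)$ is a finite $\QQ$--linear combination of cycles $c\times c^\prime$ with $c,c^\prime\in A^2_{hom}(S)$.

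It thus suffices to treat $b=\sum_k c_k\times c_k^\prime$ and $b^\prime=\sum_l d_l\times d_l^\prime$ with all factors in $A^2_{hom}(S)$. Writing $\tau$ for the involution of $S^{\times 4}$ exchanging the two $S\times S$--blocks, one has $b^\prime\times b=\tau_\ast(b\times b^\prime)$, so it is enough to check that $\tau$ fixes each summand $c_k\times c_k^\prime\times d_l\times d_l^\prime$. Decomposing $\tau=\tau_{13}\circ\tau_{24}$ into transpositions of $K3$--factors, for $\tau_{24}$ one computes
  \[ (\tau_{24})_\ast\bigl(c_k\times c_k^\prime\times d_l\times d_l^\prime\bigr)-c_k\times c_k^\prime\times d_l\times d_l^\prime\ =\ (p_1)^\ast c_k\cdot(p_3)^\ast d_l\cdot\varphi^\ast\bigl(d_l^\prime\times c_k^\prime-c_k^\prime\times d_l^\prime\bigr)\ ,\]
where $\varphi=(p_2,p_4)\colon S^{\times 4}\to S\times S$; this vanishes because $d_l^\prime\times c_k^\prime=c_k^\prime\times d_l^\prime$ in $A^4(S\times S)$ is precisely Voisin's conjecture for $S$ (\cite{moi}), and the case of $\tau_{13}$ is entirely symmetric. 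Hence $\tau_\ast(b\times b^\prime)=b\times b^\prime$, i.e. $b\times b^\prime=b^\prime\times b$, completing the argument. I expect the genuinely delicate point to be the identification of $h(X)$ through the crepant resolution in the first paragraph --- checking that the exceptional locus contributes nothing to zero--cycles and that the resulting comparison with $S\times S$ respects exterior products; once this motivic dictionary is in place, the remaining steps are purely formal manipulations with the MCK bigrading, and Voisin's conjecture for $S$ is invoked only at the very last line.
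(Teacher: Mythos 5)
Your proposal is correct and its overall strategy coincides with the paper's: reduce (\rom1) to (\rom2) via the degree--$2$ dominant rational map, pass from the Calabi--Yau $X$ to the invariant part of $A^4_{hom}(S\times S)$, identify that invariant part with $A^4_{(4)}(S\times S)=A^2_{hom}(S)\otimes A^2_{hom}(S)$ using the MCK bigrading and the fact that $\iota^\ast=-\ide$ on $A^2_{hom}(S)$ (because $S/\iota$ is rational) while fixing the Beauville--Voisin class, and finally invoke \cite[Proposition 14]{moi}; your closing computation with the transpositions $\tau_{13},\tau_{24}$ just makes explicit the reduction the paper states in one line. The one place where you genuinely diverge is the step you yourself flag as delicate: the passage through the crepant resolution. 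You propose to control $A^4_{hom}(X)$ by describing the resolution geometrically (fixed locus $C\times C$, blow--up, motivic blow--up formula, trivial action on the exceptional $\PP(N)$, and the vanishing $A^3(C\times C)=0$). This can be made to work --- the fixed locus of a non--symplectic involution on a $K3$ is indeed a disjoint union of smooth curves, a single blow--up does resolve the transversal $A_1$ singularities crepantly, and any other crepant resolution is birational to this one, so $A_0$ is unaffected --- but each of these claims requires verification, and you must also justify that the resulting identification is implemented by a correspondence split--injective on the relevant groups. The paper avoids all of this with a much softer argument (its lemma \ref{blowup}): since $Z=(S\times S)/(\iota\times\iota)$ is a projective quotient variety, operational Chow cohomology satisfies $A^4(Z)\cong A_0(Z)$, and Kimura's exact sequence $0\to A^4(Z)\to A^4(X)\oplus A^4(T)\to A^4(E)$ kills the contribution of the singular locus $T$ and the exceptional divisor $E$ purely for dimension reasons ($\dim T=2$, $\dim E=3$), with no information needed about the structure of the resolution or of the fixed locus. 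So your route buys a concrete motivic decomposition of $h(X)$, at the cost of several geometric verifications; the paper's route buys robustness and brevity. Either way the proof is sound.
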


  \begin{proof} It will suffice to treat case (\rom2). Indeed, let $X_1$ be a smooth fourfold as in case (\rom1). As we have seen (remark \ref{dom}), $X_1$ is rationally dominated by a fourfold $X_2$ which is as in case (\rom2). The rational map $\phi\colon X_2\dashrightarrow X_1$ gives rise to a commutative diagram
   \[  \begin{array}[c]{ccc}
      A^4_{hom}(X_2)\otimes A^4_{hom}(X_2) & \xrightarrow {} &   A^8_{}(X_2\times X_2)  \\
     \ \ \ \ \ \  \uparrow {\scriptstyle (\phi^\ast,\phi^\ast)}&&\ \ \ \ \ \  \uparrow {\scriptstyle (\phi\times\phi)^\ast}\\
       A^4_{hom}(X_1)\otimes A^4_{hom}(X_1) & \xrightarrow {} &   A^8_{}(X_1\times X_1)  \\
       \end{array}\]
    (Here the horizontal arrows are defined as $(a,a^\prime)\mapsto a\times a^\prime - a^\prime\times a$.) Since the vertical arrows are injective, we are reduced to proving the statement for $X_2$.
    
 Let us now treat case (\rom2), i.e. let us suppose $X=X_2$ is obtained as a crepant resolution of 
  \[ Z:=  (S\times S)/(\iota\times\iota)\ ,\]
  where $S$ is a double plane as in the theorem.

  \begin{lemma}\label{blowup} The morphism $f\colon X\to Z$ induces isomorphisms
   \[ \begin{split} f^\ast\colon\ \ \ A_0(Z)\ &\xrightarrow{\cong}\ A^4(X)\ ,\\
            (f\times f)^\ast\colon\ \ \ A_0(Z\times Z)\ &\xrightarrow{\cong}\ A^8(X\times X)\ ,\\
            \end{split}   \]
  \end{lemma} 
  
  \begin{proof} As $Z$ is a projective quotient variety, proposition \ref{opquot} applies, i.e. the natural map $A^4(Z)\to A_0(Z)$ (from operational Chow cohomology to the Chow group) is an isomorphism. Let $T\subset Z$ denote the singular locus, and $E:=f^{-1}(T)\subset X$ the exceptional divisor. Thanks to \cite{Kim0}, there exist exact sequences
   \[ 0\to A^i(Z)\to A^i(X)\oplus A^i(T)\to A^i(E) \]
   for all $i$. Taking $i=4$, and noting that $A^4(T)=A^4(E)=0$ for dimension reasons, we obtain an isomorphism
   \[ f^\ast \colon\ \ \ A_0(Z)=A^4(Z)\ \xrightarrow{\cong}\ A^4(X)\ .\]
   The argument for $Z\times Z$ is only notationally different.
  \end{proof}
  
  The resolution morphism $f\colon X\to Z$ induces a commutative diagram    
  \[  \begin{array}[c]{ccc}
      A^4_{hom}(X)\otimes A^4_{hom}(X) & \xrightarrow {} &   A^8_{}(X\times X)  \\
     \ \ \ \ \ \  \uparrow {\scriptstyle (f^\ast,f^\ast)}&&\ \ \ \ \ \  \uparrow {\scriptstyle (f\times f)^\ast}\\
       A^4_{hom}(Z)\otimes A^4_{hom}(Z) & \xrightarrow {} &   A^8_{}(Z\times Z)  \\
       \end{array}\]
       
    Since vertical arrows are isomorphisms (lemma \ref{blowup}), we are reduced to proving the statement for $Z$, i.e. we need to prove that for all $a,a^\prime\in A^4_{hom}(Z)$, one has equality
    \begin{equation}\label{Z}   a\times a^\prime - a^\prime\times a =0\ \ \ \hbox{in}\ A^8(Z\times Z)\ .\end{equation}
    
    Next, we reduce to $S\times S$ using the following lemma:
    
    \begin{lemma}\label{prod} Let $q\colon S\times S\to Z$ denote the quotient morphism. Then
    \[ \ima\Bigl( A^4_{hom}(Z)\xrightarrow{q^\ast} A^4(S\times S)\Bigr) = A^4_{(4)}(S\times S)\ .\]
    \end{lemma}
    
    \begin{proof} First, we observe that
      \[  \ima\Bigl( A^4_{hom}(Z)\xrightarrow{q^\ast} A^4(S\times S)\Bigr) \ \subset\ A^4_{hom}(S\times S)=\bigoplus_{j\in\{2,4\}}   A^4_{(j)}(S\times S)  \ .\]
      Next, we observe that
      \[  \ima\Bigl( A^4_{}(Z)\xrightarrow{q^\ast} A^4(S\times S)\Bigr) = A^4_{}(S\times S)^{(\iota\times\iota)}\ \]
      (where $A^\ast()^{(\iota\times\iota)}$ denotes cycles invariant under ${(\iota\times\iota)}$).
      
     The following claim concludes the proof of lemma \ref{prod}:
     
     \begin{claim}\label{anti} We have
     \[ (\iota\times\iota)^\ast =\begin{cases} \ide\ \colon\ & A^4_{(4)}(S\times S)\ \to A^4(S\times S)\ ,\\
                      -\ide\ \colon\ & A^4_{(2)}(S\times S)\ \to A^4(S\times S)\ .\\
                      \end{cases}\]
                      \end{claim}
                      
         To prove the claim, we note that
         \begin{equation}\label{iota}
          \iota^\ast= \begin{cases} -\ide \colon\ & A^2_{(2)}(S)\ \to\ A^2(S)\ ,\\
                                                      \ide \colon\ & A^2_{(0)}(S)\ \to\ A^2(S)\ .\\
                                                      \end{cases}\end{equation}   
                            Indeed, let $p\colon S\to V:=S/\iota$ denote the quotient morphism. Then we have
               \[ p^\ast p_\ast= \ide + \iota^\ast\colon\ \ \ A^i(S)\ \to\ A^i(S)  \ \ \ \forall i\ .\]
               On the other hand, 
               \[  p^\ast p_\ast=0\colon\ \ \ A^2_{hom}(S)\ \to\ A^2_{hom}(S)\ ,\]
               because $V$ is birational to $\PP^2$ and so $A^2_{hom}(V)=0$. This proves the first line of (\ref{iota}).
               For the second line of (\ref{iota}), we note that $A^2_{(0)}(S)$ is one--dimensional and spanned by the class of the intersection $D\cdot D^\prime$, where $D,D^\prime$ are any non--zero effective divisors \cite{BV}. Letting $B\subset S$ denote 
      the inverse image of the ramification locus, and setting $D=D^\prime=B$ establishes the second line of (\ref{iota}).

      The equalities (\ref{iota}) suffice to prove the claim, since 
      \[ A^4_{(j)}(S\times S)=\bigoplus_{j_1+j_2=j} A^2_{(j_1)}(S)\otimes A^2_{(j_2)}(S) \]
      (because of the bigraded ring structure), and
      \[ A^2(S)=A^2_{(0)}(S)\oplus A^2_{(2)}(S)\]
      (because $\pi_1^S=0$). This proves the claim, and hence lemma \ref{prod}.
       \end{proof}  
       
     Lemma \ref{prod} implies there is a commutative diagram
        \[  \begin{array}[c]{ccc}
      A^4_{(4)}(S\times S)\otimes A^4_{(4)}(S\times S) & \xrightarrow {} &   A^8_{(8)}(S^4)  \\
     \ \ \ \ \ \  \uparrow {\scriptstyle (q^\ast,q^\ast)}&&\ \ \ \ \ \  \uparrow {\scriptstyle (q\times q)^\ast}\\
       A^4_{hom}(Z)\otimes A^4_{hom}(Z) & \xrightarrow {} &   A^8_{}(Z\times Z)  \\
       \end{array}\]
     
  We observe that the right vertical arrow is obviously injective (indeed, $(q\times q)_\ast(q\times q)^\ast=4\ide$). Hence, to prove equality (\ref{Z}),
  it suffices to prove the following statement: for any   
  $a,a^\prime\in A^4_{(4)}(S\times S)$, there is equality
   \begin{equation}\label{SS}   a\times a^\prime - a^\prime\times a =0\ \ \ \hbox{in}\ A^8(S^4)\ .\end{equation}
        
   Since
   \[ A^4_{(4)}(S\times S) = A^2_{(2)}(S)\otimes A^2_{(2)}(S)=      A^2_{hom}(S)\otimes A^2_{hom}(S)    \ ,\]
   we can further reduce (\ref{SS}) to a statement for $S$. This statement for $S$ is known to hold:
   
   \begin{proposition} Let $S$ be a $K3$ surface obtained as (a desingularization of) a double plane branched along the  union of a smooth quartic and a smooth quadric. For any $a,a^\prime\in A^2_{hom}(S)$, one has
   \[ a\times a^\prime - a^\prime\times a =0\ \ \ \hbox{in}\ A^4(S\times S)\ .\]
   \end{proposition}
   
   \begin{proof} This is \cite[Proposition 14]{moi}.
    \end{proof}
   
   This concludes the proof of theorem \ref{main}.
        \end{proof}
  
  As a corollary, a particular case of the generalized Hodge conjecture is verified:
  
  \begin{corollary}\label{ghc} Let $X$ be a Calabi--Yau fourfold as in theorem \ref{main}. Then the Hodge substructure
    \[ \wedge^2 H^4(X)\ \subset\ H^8(X\times X)\]
    is supported on a divisor.
  \end{corollary}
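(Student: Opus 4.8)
The plan is to deduce the corollary from theorem \ref{main} in the usual way, via the link between Voisin's conjecture and the generalized Hodge conjecture. As in the proof of theorem \ref{main}, it suffices to treat case (\rom2): by remark \ref{dom} the fourfold $X_1$ of case (\rom1) is rationally dominated by a fourfold $X_2$ of case (\rom2) through a degree $2$ map $\phi\colon X_2\dashrightarrow X_1$, the pullback $(\phi\times\phi)^\ast$ realizes $\wedge^2 H^4(X_1)$ as a sub-Hodge structure of $\wedge^2 H^4(X_2)$, and $(\phi\times\phi)_\ast$ carries a divisor-supported sub-Hodge structure of $H^8(X_2\times X_2)$ to a divisor-supported one of $H^8(X_1\times X_1)$; since $(\phi\times\phi)_\ast(\phi\times\phi)^\ast=4\,\ide$ lets one recover $\wedge^2 H^4(X_1)$ from $\wedge^2 H^4(X_2)$ by pushforward, the statement for $X_2$ implies that for $X_1$. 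So from now on $X=X_2$ is a crepant resolution $f\colon X\to Z:=(S\times S)/(\iota\times\iota)$.

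Next I would bring in the self-correspondence
\[ \Gamma:=\Delta_{13}\cdot\Delta_{24}-\Delta_{14}\cdot\Delta_{23}\ \in\ A^8(X^4) \]
of $X\times X$, where $\Delta_{ij}\subset X^4$ denotes the partial diagonal along which the $i$-th and $j$-th coordinates coincide. On the Künneth component $H^4(X)\otimes H^4(X)\subset H^8(X\times X)$ the induced operator $\Gamma_\ast$ is $\alpha\otimes\beta\mapsto\alpha\otimes\beta-\beta\otimes\alpha$, whose image is exactly the copy of $\wedge^2 H^4(X)$ sitting inside $H^8(X\times X)$; on the remaining components $H^a(X)\otimes H^b(X)$ with $a+b=8$, $a\neq b$, its image lands in $\bigl(H^a(X)\otimes H^b(X)\bigr)\oplus\bigl(H^b(X)\otimes H^a(X)\bigr)$ with $a<4$, which is supported on a divisor of $X\times X$ --- because $H^1(X)=0$, because $H^2(X)$ consists of $(1,1)$-classes hence is algebraic by the Lefschetz theorem on $(1,1)$-classes, because $H^3(X)$ is supported on the exceptional locus of $f$ (as $H^3(Z)=H^3(S\times S)^{\iota\times\iota}=0$), and because $H^0(X)$ is spanned by an algebraic class. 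Thus $\Gamma_\ast H^8(X\times X)$ contains $\wedge^2 H^4(X)$, and it is enough to show that $\Gamma_\ast H^8(X\times X)$ is supported on a divisor of $X\times X$.

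Here theorem \ref{main} does the work: it says precisely that $\Gamma_\ast(a\times a')=a\times a'-a'\times a=0$ in $A^8(X\times X)$ for all $a,a'\in A^4_{hom}(X)$, i.e. $\Gamma$, regarded as a self-correspondence of $X\times X$, annihilates every product of homologically trivial zero-cycles. The remaining --- and principal --- task is to upgrade this Chow-theoretic vanishing to the cohomological support statement, by a Bloch--Srinivas type spreading argument: a homologically trivial zero-cycle on $X$ is supported on a (possibly singular) curve, so one spreads the family of rational equivalences $a\times a'-a'\times a\sim 0$ over a parameter variety, decomposes the resulting cycle on the relevant product into its Künneth components over the parameter space, and reads off that $\wedge^2 H^4(X)$ lies in the image of the cohomology of a divisor --- using that zero-cycles detect all of the transcendental part $H^4_{tr}(X)$, so that letting the curve vary fills out $\wedge^2 H^4(X)$. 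Alternatively, following the reduction steps in the proof of theorem \ref{main}, one can descend all the way to $S$: modulo summands supported on a divisor, $H^4(X)$ is isomorphic to $H^2_{tr}(S)\otimes H^2_{tr}(S)$, and the plethysm decomposition $\wedge^2\bigl(H^2_{tr}(S)\otimes H^2_{tr}(S)\bigr)=\wedge^2\bigl(\operatorname{Sym}^2 H^2_{tr}(S)\bigr)\oplus\bigl(\operatorname{Sym}^2 H^2_{tr}(S)\otimes\wedge^2 H^2_{tr}(S)\bigr)\oplus\wedge^2\bigl(\wedge^2 H^2_{tr}(S)\bigr)$ then reduces matters to showing that $\wedge^2 H^2_{tr}(S)$ and $\wedge^2\bigl(\operatorname{Sym}^2 H^2_{tr}(S)\bigr)$ are supported on divisors of self-products of $S$ --- the former being the classical consequence of Voisin's conjecture for $S$ (\cite[Proposition 14]{moi}), the latter following from the finite-dimensionality of the (transcendental) motive of $S$. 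The main obstacle, in either route, is exactly this bridging step: passing from the identity of theorem \ref{main} (equivalently, the vanishing of the naive antisymmetrised product of null-homologous zero-cycles) to the assertion that the associated Hodge substructure is supported on a divisor, and making the spreading argument --- or the appeal to finite-dimensionality --- precise while checking that the Hodge-theoretic bookkeeping leaves behind only divisor-supported remainders.
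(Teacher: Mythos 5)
Your overall strategy is the right one (this is Voisin's observation that her conjecture implies the coniveau statement, via a Bloch--Srinivas decomposition of an antisymmetrizing self-correspondence of $X\times X$), but the proposal stops exactly where the proof actually has to be carried out, and the correspondence you chose is not the one to which Bloch--Srinivas applies. Theorem \ref{main} only tells you that $\Gamma:=\Delta_{13}\cdot\Delta_{24}-\Delta_{14}\cdot\Delta_{23}$ kills cycles of the special form $a\times a'$ with $a,a'\in A^4_{hom}(X)$; it does \emph{not} kill all of $A^8(X\times X)$ (for a general point, $\Gamma_\ast(x,y)=(x,y)-(y,x)$, and the cross terms $x_0\times(a'-a)-(a'-a)\times x_0$ in the expansion have no reason to vanish). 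The Bloch--Srinivas argument needs a correspondence annihilating \emph{all} zero-cycles of $X\times X$. The paper fixes this by first establishing that $X$ satisfies the Lefschetz standard conjecture $B(X)$, so that algebraic K\"unneth projectors exist, and by choosing representatives of $\pi^X_2,\pi^X_3,\pi^X_5,\pi^X_6$ with small support (using $h^{2,0}(X)=0$ and $H^3(X)=N^1H^3(X)$, the latter coming from the exceptional divisor of the resolution) so that $(\pi^X_4)_\ast A^4(X)=A^4_{hom}(X)$. One then works with $\frac12(\Delta_{X\times X}-\Gamma_\tau)\circ(\pi^X_4\times\pi^X_4)$, which \emph{does} annihilate every zero-cycle of $X\times X$ by theorem \ref{main}, and which still acts on $H^8(X\times X)$ as the projector onto $\wedge^2H^4(X)$. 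Your attempt to dispense with the projectors by checking the other K\"unneth components cohomologically does not substitute for this: the Bloch--Srinivas hypothesis is a Chow-theoretic one.

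The second and decisive gap is that you explicitly leave the ``bridging step'' as an open obstacle rather than proving it. Once the corrected $\Gamma$ kills $A^8(X\times X)$, the argument is short and you should carry it out: Bloch--Srinivas gives $\Gamma=\Gamma_1+\Gamma_2$ with $\Gamma_1$ supported on $D\times X\times X$ and $\Gamma_2$ on $X\times X\times D$ for a divisor $D\subset X\times X$; $(\Gamma_2)_\ast H^8$ obviously lands in $N^1H^8$, and $(\Gamma_1)_\ast$ factors through $H^8(\wt D)$ for $\wt D$ a resolution of the sevenfold $D$, where $H^8(\wt D)=N^1H^8(\wt D)$ by hard Lefschetz, and coniveau is preserved by correspondences. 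No spreading over a parameter space of curves, and no appeal to finite-dimensionality of the motive of $S$, is needed. (Your preliminary reduction of case (\rom1) to case (\rom2) via the degree-$2$ rational map is workable but unnecessary: the paper's argument applies to both cases at once, since theorem \ref{main} is already proved for both.)
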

  
  \begin{proof} As noted by Voisin \cite{V9}, this follows from the truth of conjecture \ref{conjvois}. The argument is as follows. 
  First, we note that $X$ satisfies the standard conjecture of Lefschetz type $B(X)$ (because $S\times S$ does, and the two--dimensional centers which are blown--up in the resolution process do). This implies the K\"unneth components $\pi^X_i$ are algebraic \cite{K0}, \cite{K}.
  
  Because $h^{2,0}(X)=0$, the K\"unneth components $\pi^X_2$ and $\pi^X_6$ may be represented by cycles supported on $D\times D$ where $D\subset X$ is a divisor. Also, since $H^3(X)$ injects into $H^3(E)$
 (where $E\subset X$ denotes the exceptional divisor for the resolution morphism $X\to Z$ where $Z$ is a projective quotient variety), and $H^3(E)=N^1 H^3(E)$ (because quotient singularities are rational), we find that $H^3(X)=N^1 H^3(X)$ (here $N^\ast$ denotes the coniveau filtration \cite{BO}). This implies that $\pi^X_3$ is supported on $V\times D$ and $\pi^X_5$ is supported on $D\times V$, where $D\subset X$ is a divisor and $V\subset X$ is of dimension $2$.
  
   Define $\pi^X_4$ as
    \[ \pi^X_4:= \Delta_X - \pi^X_0 - \pi^X_2 -\pi^X_3 -\pi_5^X-\pi^X_6 - \pi^X_8\ \ \ \in A^4(X\times X)\ ,\]
    where $\pi^X_0, \pi^X_8$ are canonically defined (as in \cite{Sch}), and $\pi^X_2, \pi^X_3, \pi^X_5, \pi^X_6$ are as above. (Note that $\pi_1^X$ and $\pi_7^X$ are $0$, because $X$ is Calabi--Yau.)
   For dimension reasons, none of the $\pi^X_j, j\not=4$ act on $A^4_{hom}(X)$ and so
   \begin{equation}\label{only} A^4_{hom}(X)= (\pi^X_4)_\ast A^4_{}(X)\ .\end{equation} 
       
  Let us now  
  define a correspondence
  \[ \Gamma:= {1\over 2}(\Delta_{X\times X} -\Gamma_\tau)\circ (\pi^X_4\times \pi^X_4)\ \ \ \in A^8(X^4)\ ,\]
  where $\tau\colon X\times X\to X\times X$ denotes the involution $(x,y)\mapsto (y,x)$.
  In view of equality (\ref{only}), theorem \ref{main} can be translated as saying that
   \begin{equation}\label{noac} \Gamma_\ast A^8(X\times X)=0\ .\end{equation}
   An argument \`a la Bloch--Srinivas \cite{BS} implies that a correspondence $\Gamma$ with the property (\ref{noac}) has a decomposition
   \[ \Gamma = \Gamma_1 + \Gamma_2\ \ \ \in A^8(X^4)\ ,\]
   where $\Gamma_1$ and $\Gamma_2$ are supported on $D\times X\times X$ resp. on $X\times X\times D$, for some divisor $D\subset X\times X$. 
   
   By construction, $\Gamma$ acts on cohomology as a projector on $\wedge^2 H^4(X)\subset H^8(X\times X)$. To prove the corollary, it only remains to show that
       \begin{equation}\label{both} (\Gamma_j)_\ast H^8(X\times X)\ \subset\ N^1 H^8(X\times X)\ ,\ \ \ j=1,2\ .\end{equation}
   For $\Gamma_2$ this is obvious. For $\Gamma_1$, this is true because the action of $\Gamma_1$ on $H^8(X\times X)$ factors over $H^8(\wt{D})$ (where $\wt{D}$ denotes a resolution of singularities), and $H^8(\wt{D})=N^1 H^8(\wt{D})$ (hard Lefschetz for the sevenfold $\wt{D}$). This proves the inclusion (\ref{both}) for $j=1$, since it is known that the coniveau filtration is preserved by correspondences (\cite[Proposition 1.2]{V4} or \cite{AK}).
    
 \end{proof}

  \vskip1cm
\begin{nonumberingt} 
Many thanks to Yasuyo, Kai and Len for daily pleasant lunch breaks \Winkey.
\end{nonumberingt}


\begin{thebibliography}{}





\bibitem{AK} D. Arapura and S.-J. Kang, Functoriality of the coniveau filtration, Canad. Math. Bull. 50 no. 2 (2007), 161---171,

\bibitem{Beau0} A. Beauville, Some remarks on K\"ahler manifolds with $c_1=0$, in: Classification of algebraic and analytic manifolds (Katata, 1982), Birkh\"auser Boston, Boston 1983,

\bibitem{Beau1} A. Beauville, Vari\'et\'es K\"ahleriennes dont la premi\`ere classe de Chern est nulle, J. Differential Geom. 18 no. 4 (1983), 755---782,


\bibitem{Beau3} A. Beauville, On the splitting of the Bloch--Beilinson filtration, in: Algebraic cycles and motives (J. Nagel and C. Peters, editors), London Math. Soc. Lecture Notes 344, Cambridge 
University Press 2007,

\bibitem{BV} A. Beauville and C. Voisin, On the Chow ring of a $K3$ surface, J. Alg. Geom. 13 (2004), 417---426,

\bibitem{B} S. Bloch, Lectures on algebraic cycles, Duke Univ. Press Durham 1980,

\bibitem{BO} S. Bloch and A. Ogus, Gersten's conjecture and the homology of schemes, Ann. Sci. Ecole Norm. Sup. 4 (1974), 181---202,

\bibitem{BS} S. Bloch and V. Srinivas, Remarks on correspondences and algebraic cycles, American Journal of Mathematics Vol. 105, No 5 (1983), 1235---1253,

\bibitem{CGM} C. Camere, A. Garbagnati and G. Mongardi, Calabi--Yau quotients of hyperk\"ahler four--folds, arXiv:1607.02416,


\bibitem{FTV} L. Fu, Z. Tian and C. Vial, Motivic hyperk\"ahler resolution conjecture for generalized Kummer varieties, arXiv:1608.04968,

\bibitem{F} W. Fulton, Intersection theory, Springer--Verlag Ergebnisse der Mathematik, Berlin Heidelberg New York Tokyo 1984,


\bibitem{J2} U. Jannsen, Motivic sheaves and filtrations on Chow groups, in: Motives (U. Jannsen et alii, eds.), Proceedings of Symposia in Pure Mathematics Vol. 55 (1994), Part 1,  


\bibitem{Kim0} S.-I. Kimura, Fractional intersection and bivariant theory, Comm. Alg. 20 (1992), 285-302,

\bibitem{Kim} S.-I. Kimura, Chow groups are finite dimensional, in some sense,
Math. Ann. 331 (2005), 173---201,

\bibitem{K0} S. Kleiman, Algebraic cycles and the Weil conjectures, in: Dix expos\'es sur la cohomologie des sch\'emas, North--Holland Amsterdam, 1968, 359---386, 

\bibitem{K} S. Kleiman, The standard conjectures, in: Motives (U. Jannsen et alii, eds.), Proceedings of Symposia in Pure Mathematics Vol. 55 (1994), Part 1, 

\bibitem{moi} R. Laterveer, Some results on a conjecture of Voisin for surfaces of geometric genus one, to appear in Boll. Unione Mat. Italiana, 

\bibitem{tod} R. Laterveer, Algebraic cycles and Todorov surfaces, to appear in Kyoto Journal of Mathematics,

\bibitem{des} R. Laterveer, Some desultory remarks concerning algebraic cycles and Calabi--Yau threefolds, to appear in Rend. Circ. Mat. Palermo,

\bibitem{Mur} J. Murre, On a conjectural filtration on the Chow groups of an algebraic variety, parts I and II, Indag. Math. 4 (1993), 177---201,

\bibitem{MNP} J. Murre, J. Nagel and C. Peters, Lectures on the theory of pure motives, Amer. Math. Soc. University Lecture Series 61, Providence 2013,


\bibitem{Sch} T. Scholl, Classical motives, in: Motives (U. Jannsen et alii, eds.), Proceedings of Symposia in Pure Mathematics Vol. 55 (1994), Part 1, 

\bibitem{SV} M. Shen and C. Vial, The Fourier transform for certain hyperK\"ahler fourfolds, Memoirs of the AMS 240 (2016), no.1139,

\bibitem{SV2} M. Shen and C. Vial, The motive of the Hilbert cube $X^{[3]}$, arXiv:1503.00876, 


\bibitem{V4} C. Vial, Niveau and coniveau filtrations on cohomology groups and Chow groups, Proceedings of the LMS 106(2) (2013), 410---444,


\bibitem{V6} C. Vial, On the motive of some hyperk\"ahler varieties, to appear in J. f\"ur Reine u. Angew. Math.,


\bibitem{V9} C. Voisin, Remarks on zero--cycles of self--products of varieties, in: Moduli of vector bundles, Proceedings of the Taniguchi Congress  (M. Maruyama,  ed.), Marcel Dekker New York Basel Hong Kong 1994,

\bibitem{Vo} C. Voisin, Chow Rings, Decomposition of the Diagonal, and the Topology of Families, Princeton University Press, Princeton and Oxford, 2014,


\end{thebibliography}
\end{document}